\newtheorem{dummy}{}[section]
\newtheorem{thm}[dummy]{Theorem}
\newtheorem{defn}[dummy]{Definition}
\newtheorem{lem}[dummy]{Lemma}
\newtheorem{cor}[dummy]{Corollary}
\newtheorem{rem}[dummy]{Remark}
\newcommand{\rr}{\mathbb{R}}
\newcommand{\ee}{\mathbb{E}}
\newcommand{\cc}{\mathbb{C}}
\newcommand{\zz}{\mathbb{Z}}
\newcommand{\pt}{\mathcal{PRT}}
\newcommand{\unlabt}{\Lambda}
\newcommand{\labt}{\Gamma}
\newcommand{\cont}{\text{Cont}}
\begin{document}

\title{Exact Maximum-Entropy Estimation with Feynman Diagrams
}


%

\author{Amitai Netser Zernik}
\address{Institute for Advanced Study, Princeton, New Jersey, USA}
\email{azernik@ias.edu}

\author{Tomer M. Schlank}
\address {Department of Mathematics, The Hebrew University of Jerusalem, Jerusalem, Israel}
\email{tomer.schlank@gmail.com}
\author{Ran J. Tessler}
\address{R. J. Tessler:\newline Institute for Theoretical Studies, ETH Zurich, Switzerland}
\email{ran.tessler@eth-its.ethz.ch}

%
%
%
%


\maketitle

\begin{abstract}
A longstanding open problem in statistics is finding an explicit expression for the probability measure which maximizes entropy with respect to given constraints. In this paper a solution to this problem is found, using perturbative Feynman calculus. The explicit expression is given as a sum over weighted trees.
\keywords{Feynman Calculus \and Maximum Entropy \and Perturbative Expansion \and Weighted Trees}
\end{abstract}

\section{Introduction}

Given a finite set $\Sigma$,  the relationship between distributions on $\Sigma$ and an observable $r:\Sigma \to \mathbb{R}$ on $\Sigma$ is a two way street. First, given a distribution $P_0$ on $\Sigma$ we can ask for the expectation of $r,~\mathbb{E}_{P_0} r.$

In the other direction, suppose the distribution $P_0$ is unknown but we are given, for $1\leq i\leq k$, the expectation $\rho_i$ of some observable
$r_i:\Sigma \to \mathbb{R}$ with respect to $P_0$. Of course, in general there will be infinitely many distributions $P$ such that \begin{equation}\label{eq:constraints}\mathbb{E}_{P}r_i = \rho_i ,\quad \forall 1 \leq i \leq k. \end{equation}
Arguably, among these distributions $P$ the one that maximizes the entropy is the one that best reflects the information given by the expectations. This approach to estimation was first expounded by E. T. Jaynes in two papers, \cite{JaynesI,JaynesII}. The introduction chapter in \cite{Frisch} provides a nice review of the subject, another comprehensive source is \cite{Cover}.

A classical theorem of Ludwig E. Boltzmann shows that the maximum entropy distribution $P$ belongs to a finite-dimensional exponential family of distributions, or \emph{Boltzmann-Gibbs distributions}, parameterized by $\lambda_i,~0\leq i\leq k$. It is not hard to prove that these parameters are analytic functions of $\{\rho_i\}_{i=1}^k.$
It is possible to directly compute the first few orders of the series expansion of each $\{\lambda_i\}_{i=0}^k$ in terms of $\{\rho_i\}_{i=1}^k$. For example, the quadratic approximation defines the \emph{linear regression} multivariate normal distribution. As one tries to calculate higher orders, however, the computations quickly get out of hand. Many algorithms have been proposed for approximating the distribution numerically or by other means.

In this paper we give an explicit, combinatorial formula for computing the full Taylor expansion of $\lambda_i(\rho_1,\ldots,\rho_k)$ in terms of the joint moments of the observables $r_i$ with respect to the uniform distribution. An alternative formula in terms of cumulants is also proven, which is computationally more efficient.

In estimation problems, one is often not interested in the distribution $P$ itself, but rather in the expectation value $\sigma := \mathbb{E}_Ps$ of some observable $s:\Sigma \to \rr$. In Theorems \ref{thm:cumulants_M} and \ref{thm:7} we compute the Taylor expansion of $\sigma(\rho_1,\ldots,\rho_k).$

{\sloppy}It is worth mentioning that given the higher moments of $\{r_i\},$ (or $\{r_i\}\cup \{s\}$) the computation is completely independent of the size of the alphabet $\Sigma$. More precisely, to compute the Taylor expansion to any given order $d$ only the joint moments of order $\leq d+1$ are required.


The main tool for proving these results is a formula for the Taylor expansion of a perturbed critical point. Folk theorems along these lines go back at least to Richard P. Feynman, and constitute the "classical", or "tree level" part of what is generally called the Feynman Calculus. We found the exposition in \cite{Etingof} very useful. At any rate, the discussion in Section \ref{sec:feynman} is completely self-contained, and readers may find it applicable to other optimization problems.

Some application of the problem we consider, in its relative version to be defined below are \cite{Shell} in chemical physics, \cite{Avellanda} in mathematical finance and the recent \cite{Lin}, which in page 6, argues that part of the success of deep learning techniques is a consequence of the structure of probability distributions that maximizes the entropy. These references are of course just a drop in the sea of applications.
We therefore believe that the formulas presented here can find applications in many estimation and classification problems, different areas of science, and have considerable theoretical value as well.




In the next subsection we define precisely the problem we would like to solve, and discuss some analytic properties of the solutions.
In the next section we state our main results. In section \ref{sec:feynman} we develop the main technical tool, Feynman calculus, and discuss its application to finding critical points of series expansions. This is used to prove the main results in Section \ref{sec:proofs}.

%

\subsection{Kullback-Liebler constraint problems}
The problem of maximizing the entropy of a distribution $P$ on an alphabet $\Sigma$ subject to some constraints is an instance of the somewhat more general problem of \emph{minimizing} the \emph{Kullback-Liebler divergence}, or \emph{relative entropy} $D(P||Q)$ of $P$ relative to a given reference distribution $Q$. Indeed, if $Q$ is the uniform distribution then $D(P||Q) = \log |\Sigma| - H(P)$ where $H(P)$ is the entropy of $P$. With this in mind, we can state our results as follows.

\begin{defn}
\emph{A Kullback-Liebler constraint problem} (or KL constraint
problem) consists of a triple $\left(\Sigma,Q,\left\{ r_{i}\right\} _{i=1}^{k}\right)$ where
\begin{enumerate}
\item $\Sigma$ is a finite set called \emph{the alphabet.}
\item $Q=\left\{ q_{\sigma}\right\} $ is a probability distribution on $\Sigma,$
called \emph{the reference probability distribution.}
\item $r_{i}:\Sigma\to\rr$ for $1\leq i\leq k$ are functions called the \emph{constraint functions.}
\end{enumerate}
We assume that $q_\sigma > 0$ for all $\sigma \in \Sigma$ and that the $r_i$ are affinely independent as elements of the vector space $\rr^\Sigma.$ By that we mean that no non trivial linear combination of the functions $r_i$ and the constant function $1$ yields the $0$ function. One can reduce KL constraint problems which do not satisfy these conditions to ones that do, in an obvious way.

A KL constraint problem $\left(\Sigma,Q,\left\{ r_{i}\right\} _{i=1}^{k}\right)$ will be called \emph{augmented} if we are given an additional \emph{target function} $s:\Sigma\to\rr$. 

A KL constraint problem will be called \emph{normalized
}if $\mathbb{E}_{Q}r_{i}=0$ for $1\leq i \leq k$ and $\mathbb{E}_{Q}r_{i}r_{j}=\delta_{ij}$ for $1\leq i,j\leq k$. An augmented KL constraint problem will be called normalized if the non augmented problem is normalized and, in addition, $\mathbb{E}_{Q}s=0,~\mathbb{E}_{Q}sr_{i}=0,$ for all $1\leq i\leq k.$
\end{defn}
\begin{rem}\label{rem:normalizng}
There is no loss of generality in assuming that a KL problem is normalized. More precisely, given a KL constraint problem $(\Sigma,Q,\{r_i\}_{i=1}^k),$ one obtains a normalized problem in two steps.\\
(i) \emph{Replace $r_i$ by $r_i':=r_i - \mathbb{E}(r_i)$, so $\mathbb{E}_Q r_i'=0$.} \\
Let $E_0 \subset \rr^\Sigma$ denote the sub-vector space $\{f\in\rr^\Sigma|\mathbb{E}_Q{f}=0\}$. Because $q_\sigma>0$ the covariance restricts to a non-degenerate positive definite pairing on $E_0$:
\begin{equation}
Cov(f,f') = \sum_\sigma q_\sigma \cdot f(\sigma)\cdot f'(\sigma).
\end{equation}
(ii) \emph{Use the Gram-Schmidt process to replace the sequence $\{r_i'\}_{i=1}^k$ by a $Cov$-orthonormal sequence $\{r_i''\}_{i=1}^k$ which has the same linear span.}

$(\Sigma,Q,\{r_i''\}_{i=1}^k)$ defines an equivalent KL-constraint problem, in the sense that the set of distributions $P$ satisfying Equation (\ref{eq:constraints}) is equal to the set of distributions $P$ satisfying
\begin{equation}
\mathbb{E}_Q r_i'' = \rho''_i
\end{equation}
where $(\rho_i'')_{i=1}^k$ is obtained from $(\rho_i)_{i=1}^k$ by the affine-linear transformation effecting the change from $r_i$ to $r_i''$ computed above.

Similar reasoning shows that there is no loss of generality in assuming that an augmented KL constraint problem is normalized.
\end{rem}

Given a normalized KL constraint problem $\left(\Sigma,Q,\left\{ r_{i}\right\} _{i=1}^{k}\right)$
and sufficiently small parameters $\rho_{i}\in\rr$, $1\leq i\leq k$,
we consider the probability distribution $P=\left\{ p_{\sigma}\right\} _{\sigma\in\Sigma}$
minimizing the Kullback-Liebler divergence of $P$ relative to $Q$,
\begin{equation}\label{eq:d_KL}
D_{KL}\left(P||Q\right)=\mathbb{E}_{P}\log\left(\frac{P}{Q}\right)=\sum_{\sigma\in\Sigma}p_{\sigma}\log\left(\frac{p_{\sigma}}{q_{\sigma}}\right)
\end{equation}
subject to the constraints $\mathbb{E}_{P}r_{i}=\rho_{i}$.

By Lagrange multipliers (see the beginning of Subsection \ref{sec:exp_param}) we have
\begin{equation}
\label{eq:Lagrange}
p_{\sigma}=q_{\sigma}\exp\left(-1-\lambda_{0}-\sum_{i=1}^{k}\lambda_{i}r_{i}\left(\sigma\right)\right)
\end{equation}
for some numbers $\lambda_{i}=\lambda_{i}\left(\rho_{1},\ldots,\rho_{k}\right)\in \rr$ which we call \emph{the exponential parameters},

In Appendix \ref{sub:pf of analyticity} we prove
\begin{lem}\label{lem:analytic}
For $1\leq i \leq k$ the exponential parameter $\lambda_i$ is an analytic function of $\rho_1,\ldots,\rho_k$.
\end{lem}

Now suppose the normalized KL-constraint problem is augmented by a target function $s:\Sigma \to \rr$. We call $\sigma = \sigma(\rho_1,\ldots,\rho_k) = \ee_Ps$ the \emph{target expectation}.

Lemma \ref{lem:analytic} has the following corollary,
\begin{cor}
The target expectation $\sigma(\rho_1,\ldots,\rho_k)$ is an analytic function of $\rho_1,\ldots,\rho_k$
\end{cor}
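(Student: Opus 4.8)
The plan is to write the target expectation explicitly as a function of the exponential parameters and then argue that it inherits analyticity from Lemma \ref{lem:analytic} by the standard closure properties of analytic functions. By definition,
\[
\sigma(\rho_1,\ldots,\rho_k) = \ee_P s = \sum_{\tau \in \Sigma} s(\tau)\, p_\tau ,
\]
and substituting the Lagrange form (\ref{eq:Lagrange}) for $p_\tau$ expresses $\sigma$ as a finite combination of the functions $\rho \mapsto \lambda_i(\rho)$, the constants $q_\tau$, $s(\tau)$, $r_i(\tau)$, and the exponential function. Since $\Sigma$ is finite, every sum involved has finitely many terms, so no convergence issues arise.

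First I would dispose of the parameter $\lambda_0$, which is not covered by Lemma \ref{lem:analytic}. Using the normalization $\sum_\tau p_\tau = 1$, equation (\ref{eq:Lagrange}) can be rewritten in partition-function form as
\[
p_\tau = \frac{q_\tau \exp\!\left(-\sum_{i=1}^k \lambda_i r_i(\tau)\right)}{\sum_{\tau' \in \Sigma} q_{\tau'} \exp\!\left(-\sum_{i=1}^k \lambda_i r_i(\tau')\right)} ,
\]
which involves only $\lambda_1,\ldots,\lambda_k$. Substituting this into the expression for $\sigma$ gives
\[
\sigma(\rho_1,\ldots,\rho_k) = \frac{\sum_{\tau \in \Sigma} s(\tau)\, q_\tau \exp\!\left(-\sum_{i=1}^k \lambda_i r_i(\tau)\right)}{\sum_{\tau' \in \Sigma} q_{\tau'} \exp\!\left(-\sum_{i=1}^k \lambda_i r_i(\tau')\right)} .
\]

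It then remains to assemble the analyticity claim. By Lemma \ref{lem:analytic} each $\lambda_i$, $1 \leq i \leq k$, is analytic in $(\rho_1,\ldots,\rho_k)$; the exponential is entire, so each composite $\exp\!\left(-\sum_i \lambda_i r_i(\tau)\right)$ is analytic, and finite sums and products of analytic functions are again analytic. This makes both the numerator and the denominator above analytic. The only point that genuinely needs care --- and the sole place the positivity hypothesis is used --- is that the denominator never vanishes: since $q_{\tau'} > 0$ for all $\tau'$ and each exponential is strictly positive, the denominator is a sum of strictly positive terms. A quotient of analytic functions with nowhere-vanishing denominator is analytic, so $\sigma$ is analytic, which completes the argument. (If one prefers to retain the form (\ref{eq:Lagrange}) directly, the same reasoning shows that $\lambda_0 = -1 - \log \sum_{\tau'} q_{\tau'} \exp\!\left(-\sum_i \lambda_i r_i(\tau')\right)$ is itself analytic, since the argument of the logarithm is positive and analytic and $\log$ is analytic on the positive reals.)
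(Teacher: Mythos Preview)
Your proof is correct and follows essentially the same route as the paper's: write $\sigma$ as the ratio $\mu(\lambda_1,\ldots,\lambda_k)=\dfrac{\sum_\tau s(\tau)q_\tau\exp(-\sum_i\lambda_i r_i(\tau))}{\sum_\tau q_\tau\exp(-\sum_i\lambda_i r_i(\tau))}$, observe that $\mu$ is analytic in $\lambda$, and compose with the analytic map $\rho\mapsto\lambda(\rho)$ from Lemma~\ref{lem:analytic}. Your version is simply more explicit about the closure properties and the non-vanishing of the denominator, whereas the paper just asserts that $\mu$ is ``clearly analytic in a neighborhood of $\lambda_1=\cdots=\lambda_k=0$'' and invokes composition.
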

\begin{proof}
We have
\[
\sigma(\rho_1,\ldots,\rho_k)=\ee_{P}s=\mu\left(\lambda_{1}(\rho_1,\ldots,\rho_k),\ldots,\lambda_{k}(\rho_1,\ldots,\rho_k)\right)
\]
for
\[
\mu(\lambda_{1},\dots\lambda_{k})=\frac{\sum_{\sigma}s(\sigma)q_{\sigma}\exp\left(-\sum_{i=1}^{k}\lambda_{i}r_{i}\left(\sigma\right)\right)}{\sum_{\sigma}q_{\sigma}\exp\left(-\sum_{i=1}^{k}\lambda_{i}r_{i}\left(\sigma\right)\right)}
\]
which is clearly analytic in a neighborhood of $\lambda_{1}=\cdots=\lambda_{k}=0.$. Since the composition of analytic functions is analytic, the result follows.
\end{proof}

{\sloppy}The upshot is that the Taylor expansions of $\lambda_i(\rho_1,\ldots,\rho_k)$, $1\leq i\leq k$ and of $\sigma(\rho_{1},\dots,\rho_{k})$ converge in an open polydisc around ${\rho_{1}=\cdots=\rho_{k}=0}$. We refer
to the power series expansions of these functions about the origin
as \emph{the perturbed exponential parameters}
\begin{equation}\label{eq:lambda_i}
\lambda_{i}\left(\rho_{1},\ldots,\rho_{k}\right)=\sum_{I}L_{i,I}\rho^{I}
\end{equation}
and \emph{the perturbed expectation function}
\begin{equation}\label{eq:mu}
\sigma\left(\rho_{1},\ldots,\rho_{k}\right)=\sum_{I}M_{I}\rho^{I}
\end{equation}
here $I=\left(i_{1},\ldots,i_{k}\right)$ ranges over $\zz_{\geq 0}^k$ and $\rho^{I}:=\prod_{j=1}^{k}\rho_{j}^{i_{j}}$.
%
\section{Results}
\subsection{Moment trees}\label{subsec:moment-trees}
Let $\left(\Sigma,Q,\left\{ r_{i}\right\} _{i=1}^{k}\right)$ be
a normalized KL constraint problem. It will be notationally convenient
to write $r_{0}:\Sigma\to\mathbb{R}$ for the constant function $1$. If $\left(\Sigma,Q,\left\{ r_{i}\right\} _{i=1}^{k},s\right)$ is an \emph{augmented} KL constraint problem we introduce an additional function $r_{k+1}:\Sigma\to\mathbb{R}$ which we set equal to the target function $s$.
\begin{defn}
A \emph{rooted tree} (or RT) $\labt$ is a tree with vertices $V=V(\labt)$ and a nonempty set of edges $E=E(\labt),$ together with a distinguished leaf $v_{out} \in V$.
We call the vertex to which the distinguished leaf $v_{out}$ is connected \emph{the root} of $\labt.$ 
We define the \emph{internal vertices} $V_{in}$ to be the vertices which are not leaves. In particular, when $|V(\labt)|=2,~V_{in}=\emptyset.$
\end{defn}

\begin{defn}
\label{def:moment-tree}
For $j=0,\ldots,k+1$ a \emph{$j$-moment tree} $\labt$ for the normalized KL
constraint problem $\left(\Sigma,Q,\left\{ r_{i}\right\} _{i=1}^{k}\right)$
is a rooted tree whose edges are labeled by $\{r_{i}\}_{i=0}^{k+1}$
subject to the following conditions:
\begin{itemize}
\item Shape conditions: each vertex which is not a leaf has ${\mbox{degree }\geq3.}$
\item Labeling conditions: The edge connected to $v_{out}$ is labeled by
$r_{j};$ the edges connected to the other leaves are labeled by $r_{1},\ldots,r_{k}$.
All other edges are labeled by $r_{0},\ldots,r_{k}$.
\end{itemize}
A \emph{moment tree} is a $j-$moment tree for some $0\leq j\leq k+1$.

Two moment trees $\Gamma_1,\Gamma_2,$ are \emph{isomorphic} if there exist bijections $f_V:V(\Gamma_1)\to V(\Gamma_2),f_E:E(\Gamma_1)\to E(\Gamma_2)$ such that
\begin{enumerate}
\item $f_V(v_{out}(\Gamma_1)) = v_{out}(\Gamma_2).$
\item $f_E$ maps the edges of $v$ to the edges of $f_V(v).$
\item The label of $f_E(e)$ agrees with the label of $e.$
\end{enumerate}
An \emph{automorphism} of $\Gamma$ is an isomorphism from $\Gamma$ to itself. The equivalence classes of moment trees modulo isomorphisms are called the \emph{isomorphism types}.
\end{defn}
To a moment tree $\labt$ we associate a \emph{leaf multi-index $I_{\labt}=(a_{1},\ldots,a_{k})$}
with $a_{i}$ the number of leaves with edges labeled by $r_{i},$ excluding $v_{out}$ (but including the root, in case it is also a leaf).
We define the \emph{order} of $\Gamma$ to be $|I_{\labt}| = \sum_{i=1}^k a_i.$
Note that since the degree of inner nodes is $\geq3$ there are only
a finite number of moment trees with any given leaf multi-index.
\begin{defn}\label{def:moment_tree_amp}
Let $\labt$ be a moment tree. Let $v$ be an inner vertex of $\labt$ of degree $d$ with edges labeled by $r_{i_{1}},\ldots,r_{i_{d}}$.
\begin{enumerate}
\item The \emph{coupling
value} $C_{v}$ of $v$ is
\begin{equation}\label{eq:moment_couplings}
C_{v}=(-1)^{d+1}\mathbb{E}_{Q}r_{i_{1}}\cdots r_{i_{d}}.
\end{equation}
\item
The \emph{amplitude }$A_{\labt}$ of $\labt$
is
\[
A_{\labt}=(-1)^{|I_\Gamma|}\prod_{v\in V_{in}(\labt)}C_{v},
\]
or $-1$ when $V_{in}=\emptyset.$
\end{enumerate}
\end{defn}
\begin{thm}\label{thm:taylor-exp-parameters} Given a normalized KL constraint problem, the coefficients of the perturbed exponential parameters of Equation \ref{eq:lambda_i} are given by
\begin{equation}
\label{eq:params-by-moments}
L_{j,I}=\sum_{\labt} \frac{A_{\labt}}{|Aut(\labt)|}
\end{equation}
for $1\leq j \leq k$ where the sum runs over representatives
$\labt$ for all isomorphism types of $j$-moment trees $\labt$ with $I_{\labt}=I,$ and where $Aut(\labt)$ is the group of automorphisms of $\labt$, i.e.
the group of rooted tree automorphisms, preserving the labels and the root.
\end{thm}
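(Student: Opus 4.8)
The plan is to recognize the exponential parameters $\lambda(\rho)$ as a perturbed critical point and then feed this into the Feynman calculus of Section~\ref{sec:feynman}. I would reintroduce the constant function $r_0\equiv 1$ as a genuine coordinate direction, absorbing the $-1$ of Equation~\eqref{eq:Lagrange} into $\lambda_0$, so that $\tilde p_\sigma(\lambda)=q_\sigma\exp(-\sum_{i=0}^k\lambda_i r_i(\sigma))$, and set $Z(\lambda)=\sum_\sigma \tilde p_\sigma(\lambda)=\ee_Q\exp(-\sum_{i=0}^k\lambda_i r_i)$. Consider $\Phi(\lambda)=Z(\lambda)+\sum_{i=0}^k\rho_i\lambda_i$ with $\rho_0:=1$ held fixed and $\rho_1,\dots,\rho_k$ small. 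Since $\partial_{\lambda_j}\Phi=-\sum_\sigma \tilde p_\sigma(\lambda)\,r_j+\rho_j$, at a critical point the equation for $j=0$ reads $\sum_\sigma\tilde p_\sigma=\rho_0=1$, forcing $\tilde p$ to be a genuine distribution $P$, and then $\ee_P r_j=\rho_j$ for $1\le j\le k$. Thus the unique critical point of $\Phi$ near the origin is exactly $\lambda(\rho)$, with $\lambda(0)=0$.

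I would then read off the local Feynman data of $\Phi$ at the origin. Because the problem is normalized, the quadratic part is $\tfrac12\sum_{i=0}^k\lambda_i^2$: indeed $\partial_{\lambda_i}\partial_{\lambda_j}Z|_0=\ee_Q r_ir_j=\delta_{ij}$ for all $0\le i,j\le k$, using $\ee_Q r_i=0$ and $\ee_Q r_ir_j=\delta_{ij}$ together with $r_0\equiv 1$, while the linear term of $Z$ cancels against $\rho_0\lambda_0$. Hence the Hessian, and so the propagator, is the identity, and the interaction terms start in degree $3$, the degree-$d$ coefficient being $\tfrac{(-1)^d}{d!}\ee_Q r_{i_1}\cdots r_{i_d}$. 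Applying the expansion of a perturbed critical point from Section~\ref{sec:feynman} then writes each $\lambda_j(\rho)$ as a sum over trees, weighted by products of vertex factors and propagators and divided by the order of the automorphism group of the diagram.

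The remaining work is to match this tree sum with the right-hand side of \eqref{eq:params-by-moments}. The dictionary is: the single external (output) leg carries the index $j$ and becomes the distinguished leaf $v_{out}$ with edge label $r_j$; each source insertion $\rho_i$ (possible only for $1\le i\le k$, since $\rho_0$ is frozen) becomes a non-output leaf labeled in $\{r_1,\dots,r_k\}$, so the leaf multi-index records exactly the monomial $\rho^I$; because the propagator is $\delta_{ab}$ on indices $a,b\in\{0,\dots,k\}$, every internal edge merely carries a single matched label $r_0,\dots,r_k$ and contributes a factor $1$; and an internal vertex of degree $d$ contributes the Taylor coefficient of the interaction, which in the critical-point convention should be precisely $C_v=(-1)^{d+1}\ee_Q r_{i_1}\cdots r_{i_d}$, the valency $\ge 3$ condition reflecting that interactions begin in degree $3$. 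Collecting these factors yields $A_\labt=\prod_v C_v$ and the symmetry factor $1/|\aut(\labt)|$. It is worth noting that keeping $\lambda_0$ (rather than eliminating it by normalizing $\log Z$) is exactly what converts cumulant vertices into moment vertices and creates the internal edges labeled $r_0$; normalizing instead reproduces the cumulant formula of Theorem~\ref{thm:cumulants_M}.

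The main obstacle is the combinatorial bookkeeping in Section~\ref{sec:feynman}: one must fix the precise Feynman rules so that the signs assemble into $(-1)^{d+1}$ and, more delicately, so that the passage from labeled Feynman graphs (each counted once in the iterative solution) to isomorphism classes of moment trees produces exactly the factor $1/|\aut(\labt)|$. A secondary subtlety is the low-order behaviour: the constant and linear terms correspond to the degenerate trees with $|V(\labt)|=2$, which is why the definition singles out this case, and I would need to check that the bare-propagator contribution is correctly accounted for there.
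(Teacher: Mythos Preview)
Your proposal is correct and follows essentially the same route as the paper: your function $\Phi(\lambda)=Z(\lambda)+\sum_{i=0}^k\rho_i\lambda_i$ with $\rho_0=1$ differs from the paper's $\tau^\rho(\lambda')$ (obtained from $\hat\tau^\rho$ of Equation~\eqref{eq:tau_hat} by the shift $\lambda_0'=\lambda_0+1$) only by an additive constant, and from there both arguments compute the same Feynman data \eqref{eq:moment_coefs} and invoke Theorem~\ref{thm:feynman}. Your observation that keeping $\lambda_0$ rather than eliminating it is what produces moments rather than cumulants is exactly the distinction the paper draws between Subsections~\ref{sec:exp_param} and~\ref{sec:333} (you meant Theorem~\ref{thm:cumulants_L} rather than~\ref{thm:cumulants_M}); and the low-order $|V(\Gamma)|=2$ subtlety you flag is real and worth checking carefully against Definition~\ref{def:moment_tree_amp}.
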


\begin{thm}\label{thm:7}
Given an augmented normalized KL constraint problem, the perturbed expectation coefficients of Equation \ref{eq:mu} are given by
\[M_{I}=\sum_{\labt} \frac{A_{\labt}}{|Aut(\labt)|}\]
where the sum runs over representatives $\labt$ for all isomorphism types of $\left(k+1\right)$-moment
trees $\labt$ with $I_{\labt}=I$.
\end{thm}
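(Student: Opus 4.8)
The plan is to reduce Theorem \ref{thm:7} to the Feynman calculus already used for Theorem \ref{thm:taylor-exp-parameters}, by observing that the target expectation is nothing but an \emph{observable evaluated at the very critical point} that produces the exponential parameters. Set $r_{k+1} := s$ and, writing $\lambda = (\lambda_0,\ldots,\lambda_k)$ for the exponential parameters (with $\lambda_0$ the normalizing parameter, shifted so that $\lambda = 0$ at $\rho = 0$), introduce the functions $g_i(\lambda) := \sum_\sigma q_\sigma\, r_i(\sigma)\exp(-\sum_{l=0}^k \lambda_l r_l(\sigma))$ for $0 \le i \le k+1$. At the critical point $\lambda(\rho)$ the distribution $P$ is normalized, i.e. $g_0(\lambda(\rho)) = 1$, so that $\mu(\rho) = \ee_P s = g_{k+1}(\lambda(\rho))$. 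The first step is to record the Taylor expansion of $g_{k+1}$ at $\lambda = 0$: its degree-$n$ part is $\frac{(-1)^n}{n!}\sum_{l_1,\ldots,l_n}\ee_Q[r_{k+1} r_{l_1}\cdots r_{l_n}]\,\lambda_{l_1}\cdots\lambda_{l_n}$, whose coefficients are exactly the coupling values carried by a vertex incident to one $r_{k+1}$-labeled edge and to $n$ further edges labeled in $\{r_0,\ldots,r_k\}$.

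Next I would substitute into this series the tree expansions of the critical-point coordinates $\lambda_l(\rho)$, $0 \le l \le k$, provided by (the proof of) Theorem \ref{thm:taylor-exp-parameters}; the same Feynman argument yields the missing case $l = 0$, expressing $\lambda_0(\rho)$ as a sum over $0$-moment trees. Each factor $\lambda_{l_a}(\rho)$ appearing in a monomial of $g_{k+1}$ is a sum over $l_a$-moment trees, whose distinguished edge carries the label $r_{l_a}$. Gluing the distinguished edge of each such subtree onto the corresponding descending edge of the top vertex coming from the expansion of $g_{k+1}$ produces a rooted tree whose edge at $v_{out}$ is labeled $r_{k+1}$ --- that is, precisely a $(k+1)$-moment tree, with the top vertex as its root. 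This is the standard decomposition of a rooted tree into its root vertex together with the multiset of pendant subtrees hanging from it, and it is a bijection; under it the amplitude factorizes as $A_\labt = C_{\mathrm{root}}\cdot\prod_a A_{\labt_a}$, and the sign $(-1)^{d+1}$ of the root coupling ($d$ = root valency) matches the sign $(-1)^n$ of the degree $n = d-1$ Taylor term.

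The crux is the symmetry-factor bookkeeping. I must check that the factor $\frac{1}{n!}$ from the degree-$n$ Taylor term of $g_{k+1}$, once summed over all assignments of the pendant subtrees to the $n$ descending edges of the root and combined with the subtree factors $1/|\mathrm{Aut}(\labt_a)|$, collapses to the single factor $1/|\mathrm{Aut}(\labt)|$. This is the usual assertion that an automorphism of $\labt$ (fixing $v_{out}$ and hence the root) is the same datum as a choice of automorphism of each pendant subtree together with a permutation of the isomorphic pendant subtrees among themselves, the latter being exactly what $\frac{1}{n!}$ corrects for. I expect this to be the main obstacle, and the cleanest route is to phrase $\mu = g_{k+1}(\lambda(\rho))$ as one further instance of ``an analytic observable evaluated at a perturbed critical point'' and quote the general tree-sum-with-$1/|\mathrm{Aut}|$ statement of Section \ref{sec:feynman}, rather than redo the count by hand.

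Finally I would treat the low-valency root trees on their own, as these are where the index $k+1$ genuinely differs from an index $1 \le j \le k$. The smallest tree (root incident only to the $r_{k+1}$-edge) contributes the constant term $\ee_Q s$, which need not vanish --- in contrast to the vanishing constant $\ee_Q r_j = 0$ for an honest exponential parameter --- while the trees linear in $\rho$ reproduce the coefficients $\ee_Q[s\, r_l]$. Summing the contributions of all $(k+1)$-moment trees with a fixed leaf multi-index $I$ then gives $M_I = \sum_\labt A_\labt/|\mathrm{Aut}(\labt)|$, completing the proof.
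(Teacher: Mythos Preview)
Your proposal is correct and follows essentially the same route as the paper: write $\mu$ as $g_{k+1}(\lambda')=\sum_\sigma s(\sigma)q_\sigma\exp(-\sum_{i=0}^k\lambda'_i r_i(\sigma))$ via the normalization identity (the paper's Lemma~\ref{cor:lambda_0}), Taylor-expand this in $\lambda'$ to obtain a sum over star-shaped trees with root labeled $r_{k+1}$, then substitute the moment-tree expansions of the critical coordinates $\lambda'_i$ (including $i=0$) from the proof of Theorem~\ref{thm:taylor-exp-parameters} and glue, with the $1/|\mathrm{Aut}(\labt)|$ factor coming from orbit--stabilizer exactly as you describe. The only caveat is that your suggested ``cleanest route'' of directly quoting Theorem~\ref{thm:feynman} does not literally apply, since that theorem computes the critical point itself rather than an observable evaluated at it; the paper (like your fallback plan) does the substitution-and-gluing step by hand.
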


\begin{rem}
Suppose we are given a normalized KL constraint problem, and augment it with a target function $s.$ Subtracting its expectation and applying the Gram-Schmidt process on $s$ as in \ref{rem:normalizng} is just equivalent to performing the classical \emph{linear regression} estimation on $s.$ For this reason, it is natural to interpret Theorem \ref{thm:7}, as well as Theorem \ref{thm:cumulants_M} below as a ''perturbed regression'' method, which generalizes linear regression to incorporate higher order moments.
\end{rem}

\subsection{Cumulant trees}
Let $\left(\Sigma,Q,\left\{ r_{i}\right\} _{i=1}^{k}\right)$ be
a normalized KL constraint problem. Now we will not have use for $r_0$; but as before, if the problem is augmented we will denote $r_{k+1}=s$.

\begin{defn}
For $j=1,\ldots,k+1$ \emph{a $j$-cumulant tree} is a rooted tree whose
edges are labeled by $\{r_{i}\}_{i=1}^{k+1}$ subject to the following
conditions:
\begin{enumerate}
\item Shape conditions: each vertex which is not a leaf has $\mbox{degree}\thinspace \geq3$.
\item Labeling conditions: The edge connected to $v_{out}$ is labeled by
$r_{j},$ all other edges (including edges connected to non-root leaves)
are labeled by $r_{1},\ldots,r_{k}$.
\end{enumerate}
A \emph{cumulant tree} is a $j$-cumulant tree for some $1\leq j\leq k+1$. We define the notions of isomorphism, automorphism and isomorphism types with similarly to the analogous notions for moment trees.
\end{defn}
\emph{The leaf multi-index} of $\labt$ is defined as before.
Again there are only a finite number of cumulant trees
with any given multi-index.
\begin{defn}\label{def:cumulant_tree_amp}
Let $\labt$ be a cumulant tree. Let $v$ be an inner vertex of $\labt$ of degree $k$ whose edges are labeled by $r_{i_{1}},\ldots,r_{i_{d}}$.
\begin{enumerate}
\item
The \emph{coupling value} $C_{v}$ of $v$ is
\[
C_{v}=(-1)^{d+1}\kappa(r_{i_{1}},\ldots,r_{i_{d}})
\]
where $\kappa(r_{i_{1}},\ldots,r_{i_{k}})$
is the \emph{joint cumulant} (cf. {Eq (\ref{eq:joint cumulants})}) of $r_{i_{1}},\ldots,r_{i_{k}}$.
\item
The \emph{amplitude} of $\labt$ is
\begin{equation}\label{eq:coupling_cum}
A_{\labt}:=(-1)^{|I_\Gamma|}\prod_{v\in V_{in}(\labt)}C_{v},
\end{equation}
or $-1$ when $V_{in}=\emptyset.$
\end{enumerate}
\end{defn}

\begin{thm}\label{thm:cumulants_L}
Given a normalized KL constraint problem, the coefficients of the perturbed exponential parameters, defined in \ref{eq:lambda_i}, are given by
\begin{equation}\label{eq:cumulants_L}
L_{j,I}=\sum \frac{A_{\labt}}{|Aut(\labt)|}\mbox{ for }1\leq j \leq k,
\end{equation}
where the sum runs over all isomorphism
types of $j$-cumulant trees $\labt$ with $I_{\labt}=I$.
\end{thm}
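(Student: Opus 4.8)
The strategy is to recognize that the vector $\lambda(\rho)=(\lambda_1,\dots,\lambda_k)$ is the critical point of a single explicit function, and then to feed that function into the perturbed–critical–point expansion of Section~\ref{sec:feynman}, whose output is by construction a weighted sum over trees. Writing $Z(\lambda)=\ee_Q\exp\!\left(-\sum_{i=1}^k\lambda_i r_i\right)$, the normalization in Equation~\eqref{eq:Lagrange} forces $1+\lambda_0=\log Z(\lambda)$, and the constraints $\ee_P r_i=\rho_i$ become $\rho_i=-\partial_{\lambda_i}\log Z(\lambda)$ (this is the Lagrange–multiplier computation referenced at the start of Subsection~\ref{sec:exp_param}). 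Equivalently, $\lambda(\rho)$ is the critical point in $\lambda$ of
\[
G_\rho(\lambda)\;=\;\log Z(\lambda)+\sum_{i=1}^k\rho_i\lambda_i,
\]
which is precisely a $\rho$-dependent function of the shape to which the Feynman calculus applies.

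First I would read off the Feynman data of $G_\rho$. By definition of the cumulant generating function, the coefficient of $\lambda_{i_1}\cdots\lambda_{i_n}$ in $\log Z$ is $\tfrac{(-1)^n}{n!}\,\kappa(r_{i_1},\dots,r_{i_n})$. The normalization hypotheses are exactly what put this in usable form: $\ee_Q r_i=0$ gives $\kappa(r_i)=0$, so the linear term of $\log Z$ vanishes, and $\ee_Q r_i r_j=\delta_{ij}$ gives $\kappa(r_i,r_j)=\delta_{ij}$, so the quadratic term is $\tfrac12\sum_i\lambda_i^2$. Hence the Hessian of $G_\rho$ at the origin is the identity, the associated propagator is $\delta_{ij}$, and only the cubic–and–higher cumulants survive as genuine interaction vertices. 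Differentiating, the critical-point equation takes the form
\[
\lambda_i=-\rho_i+\sum_{n\ge 3}\frac{1}{(n-1)!}\sum_{i_2,\dots,i_n}(-1)^{n+1}\kappa(r_i,r_{i_2},\dots,r_{i_n})\,\lambda_{i_2}\cdots\lambda_{i_n},
\]
so that a valency-$d$ interaction vertex carries exactly the coupling $(-1)^{d+1}\kappa(r_{i_1},\dots,r_{i_d})$ of Definition~\ref{def:cumulant_tree_amp}, while each source leg carries a factor $-\rho_i$.

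Next I would apply the expansion of Section~\ref{sec:feynman} to the coordinate $\lambda_j(\rho)$ and match the result term-by-term with the cumulant-tree sum. Because the propagator is $\delta_{ij}$, each internal edge merely forces its two endpoints to share a common label $r_a$ with $1\le a\le k$ and contributes weight $1$; this is exactly the rule that the edges of a cumulant tree are labelled by $r_1,\dots,r_k$ with no extra weight. The source legs become the non-root leaves, labelled by $r_1,\dots,r_k$, so that a tree with leaf multi-index $I$ contributes to the monomial $\rho^I$ of Equation~\eqref{eq:lambda_i}; the distinguished output leg, which records that we are extracting the coordinate $\lambda_j$, is the root edge and is labelled $r_j$. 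The condition that interaction vertices have valency $\ge 3$ is automatic, since the quadratic part has been absorbed into the propagator rather than appearing as a vertex. Thus each contributing graph is exactly a $j$-cumulant tree $\labt$ with $I_\labt=I$, and its weight is $A_\labt/|\mathrm{Aut}(\labt)|$.

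The step I expect to demand the most care is the matching of conventions between the abstract output of Section~\ref{sec:feynman} and Definition~\ref{def:cumulant_tree_amp} — the usual ``symmetry factor'' bookkeeping of the Feynman calculus. The vertex couplings already agree on the nose, so the real work is (i) to confirm that, depending on the precise form of the expansion in Section~\ref{sec:feynman}, its combinatorial weights either match $1/|\mathrm{Aut}(\labt)|$ directly or do so after a short orbit-counting argument converting labelled plane trees to isomorphism types, and (ii) to track the minus signs, namely the $-\rho_i$ attached to each leaf and the $(-1)^n$ appearing in the cumulant expansion of $\log Z$, so that they reassemble correctly against the vertex signs $(-1)^{d+1}$ identified above. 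This is routine but error-prone, and is the only genuinely delicate point. Everything else runs in parallel with the proof of Theorem~\ref{thm:taylor-exp-parameters}; indeed the present statement may alternatively be derived from that theorem by expanding each moment coupling into cumulants via the moment–cumulant relation, the conditions $\kappa(r_i)=0$ and $\kappa(r_i,r_j)=\delta_{ij}$ being exactly what eliminate the $r_0$-labelled edges of moment trees and force every surviving cumulant block to have valency $\ge 3$. That route, however, also produces disconnected (forest) intermediate terms and is combinatorially heavier, so I would present the direct critical-point argument as the main proof.
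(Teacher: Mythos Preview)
Your proposal is correct and follows essentially the same route as the paper: the paper eliminates $\lambda_0'$ from $\tau^\rho$ via Lemma~\ref{cor:lambda_0} to obtain the reduced function $T(\lambda_1,\dots,\lambda_k)=\log E+\phi$, which is exactly your $G_\rho(\lambda)=\log Z(\lambda)+\sum_i\rho_i\lambda_i$, and then invokes Theorem~\ref{thm:feynman}. Your discussion of the Feynman data (identity propagator from normalization, cubic-and-higher cumulant vertices, leaf sources $-\rho_i$) and your flagging of the sign and symmetry-factor bookkeeping as the one delicate step are in fact more explicit than the paper's own terse treatment.
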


\begin{thm}\label{thm:cumulants_M}
Given an augmented normalized KL constraint problem, the coefficients of perturbed expectation, defined in \ref{eq:mu}, are given by
\[M_{I}=\sum\frac{A_{\labt}}{|Aut(\labt)|},\]
where the sum runs over all isomorphism types of $\left(k+1\right)$-cumulant trees $\labt$ with $I_{\labt}=I$.
\end{thm}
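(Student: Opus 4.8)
The plan is to deduce Theorem \ref{thm:cumulants_M} from the moment-tree formula of Theorem \ref{thm:7} by a resummation procedure, exactly parallel to the way Theorem \ref{thm:cumulants_L} is obtained from Theorem \ref{thm:taylor-exp-parameters}; indeed the argument is insensitive to whether the out-edge carries the label $r_j$ with $j\le k$ or the label $r_{k+1}=s$, so it is literally the $j=k+1$ instance of that derivation. The conceptual point of passing from moments to cumulants is that the edges labelled by the constant function $r_0$ disappear: since $r_0\equiv 1$, every joint cumulant $\kappa(r_0,r_{i_1},\ldots,r_{i_m})$ with $m\ge 1$ vanishes, whereas the corresponding joint moment does not. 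This vanishing is precisely what lets the $r_0$-edges of a moment tree be absorbed into cumulant vertices.

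First I would isolate the combinatorial heart of the matter as a local identity at a single vertex. Fix labels $r_{i_1},\ldots,r_{i_d}$ with $d\ge 3$ and consider the collection of leg-labelled trees $T$ whose $d$ external legs carry these labels, whose internal edges are all labelled $r_0$, and all of whose vertices have valency $\ge 3$; to each $v\in V(T)$ attach the moment coupling $(-1)^{\deg v+1}\ee_Q$ of the product of the labels of all edges at $v$ (the $r_0$-edges contributing factors $1$, but still counting toward $\deg v$). The key lemma is
\begin{equation}
(-1)^{d+1}\kappa(r_{i_1},\ldots,r_{i_d})=\sum_{T}\frac{1}{|Aut(T)|}\prod_{v\in V(T)}(-1)^{\deg v+1}\,\ee_Q\!\Big[\prod_{\ell\text{ at }v}r_{\ell}\Big],
\end{equation}
the sum running over isomorphism types of such clusters $T$. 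I would prove this by substituting the moment–cumulant relation $\ee_Q[\prod r]=\sum_{\pi}\prod_{B\in\pi}\kappa(r_B)$ into the right-hand side and checking that, after using $\kappa(r_0,\cdots)=0$ and $\ee_Q r_i=0$, everything cancels except the single fully connected cumulant; equivalently it is a M\"obius-inversion identity on the partition lattice, tree-ified by the $r_0$-propagators. It checks out by hand for $d=3,4$, where it reduces to $\kappa_3=\ee_Q r_ir_jr_\ell$ and $-\kappa_4=-\ee_Q r_ir_jr_kr_\ell+\sum_{\text{pairings}}\ee_Q r_ar_b\,\ee_Q r_cr_d$.

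Next I would define the global collapsing map: given a $(k+1)$-moment tree $\labt$, contract every internal edge labelled $r_0$, merging the internal vertices in each $r_0$-connected cluster to a point. Since $r_0$ never labels a leaf edge nor the out-edge, and since merging $t$ vertices of valency $\ge 3$ along $t-1$ edges produces a vertex of valency $\ge t+2\ge 3$, the result $\labt'$ is a genuine $(k+1)$-cumulant tree with out-label $r_{k+1}$, the same leaf labels, and hence $I_{\labt'}=I_{\labt}$. The fibre over a fixed $\labt'$ is exactly the set of ways to replace each vertex of $\labt'$ by one of the clusters $T$ of the key lemma. Grouping the sum of Theorem \ref{thm:7} by the image $\labt'$ and applying the key lemma vertex-by-vertex should convert $\sum_{\labt\mapsto\labt'}A_{\labt}/|Aut(\labt)|$ into $\big(\prod_{v'\in V_{in}(\labt')}C_{v'}\big)/|Aut(\labt')|=A_{\labt'}/|Aut(\labt')|$, which summed over all $(k+1)$-cumulant trees $\labt'$ with $I_{\labt'}=I$ yields the asserted formula for $M_I$.

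I expect the main obstacle to be the bookkeeping of the symmetry factors $1/|Aut|$ under collapsing: the group $Aut(\labt)$ does not split as the direct product of $Aut(\labt')$ with the cluster automorphism groups, because an automorphism of $\labt'$ may permute isomorphic clusters and isomorphic legs. The clean way to dispatch this is to pass to fully half-edge-labelled trees, for which every automorphism group is trivial and each weighted sum $\sum 1/|Aut|$ becomes an honest count divided by the appropriate factorials; the collapsing map is then a plain surjection of finite sets whose fibre sizes are delivered by the labelled form of the key lemma. Equivalently, one phrases the entire computation as an identity of groupoid cardinalities, under which the fibration $\labt\mapsto\labt'$ multiplies cardinalities while the key lemma evaluates each local fibre, so that the two subtleties decouple.
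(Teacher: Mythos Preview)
Your approach is a valid alternative, but it is not the route the paper takes, and it rests on a mistaken premise about how the paper proves Theorem \ref{thm:cumulants_L}. The paper does \emph{not} obtain the cumulant theorems from the moment theorems by collapsing $r_0$-edges; rather, both cumulant theorems are proved directly by applying the Feynman calculus (Theorem \ref{thm:feynman}) to a \emph{different} function, namely $T(\lambda_1,\ldots,\lambda_k)=\log E+\phi$ obtained after eliminating $\lambda_0'$ via Lemma \ref{cor:lambda_0}. Because $\log E$ is by definition the cumulant generating function, the label $r_0$ simply never enters. For Theorem \ref{thm:cumulants_M} itself, the paper introduces an auxiliary variable $\alpha$, sets $F(\lambda,\alpha)=\log\ee_Q\exp(\alpha s-\sum_i\lambda_ir_i)$, notes that $\mu=\partial_\alpha F|_{\alpha=0}$, reads off the expansion of $\mu$ as a sum over single-vertex trees with cumulant couplings, and then substitutes the cumulant-tree expressions for the $\lambda_i$ from Theorem \ref{thm:cumulants_L}, exactly as in the proof of Theorem \ref{thm:7} but with cumulant inputs throughout.

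The collapsing picture you propose is exactly what the paper alludes to in the Remark following Theorem \ref{thm:cumulants_M}, but it is offered there as an observation, not as the proof. Your route, if carried through, buys a direct combinatorial equivalence between the two formulas and isolates the cumulant-as-connected-part phenomenon in a standalone lemma; the paper's route is shorter and avoids both your key lemma and the automorphism bookkeeping you flag as the main obstacle. Your sketch of that bookkeeping (pass to half-edge-labelled trees or argue via groupoid cardinality) is the right idea, but note that the ``key lemma'' as you state it also needs the normalization hypotheses $\ee_Qr_i=0$ and $\ee_Qr_ir_j=\delta_{ij}$ to force the $r_0$-propagators to behave and to rule out bivalent vertices---you should make that dependence explicit when you write it out.
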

These theorems are proved in Subsection \ref{sec:333}.

\begin{rem}
The number of labeled trees is exponential where the base depends on the number of labels. Since cumulant trees have one less labels, working with them produces an exponential reduction in the complexity. In practice the improvement may be even more significant since cumulants tend to decay more rapidly than moments.
\end{rem}

\begin{rem}
Consider the equivalence relation on moment trees which is generated by contracting edges labeled by $0$. Equivalence classes of this relation correspond to cumulant trees, in an obvious way. This correspondence preserves the output edge label and the leaf multiindex, and
it is possible to show that the contribution of each cumulant tree in Eq (\ref{eq:cumulants_L}) is the sum of the contributions to Eq (\ref{eq:params-by-moments}) of the moment trees in the corresponding equivalence class. A similar statement holds for the pertrubed expectation.
\end{rem}

\subsection{Further Discussion and Future Directions}
The formulas presented in this paper give a first derivation of maximum entropy estimation under constraints. Although the derivation is fairly long, the formulas themselves are simple, and can be easily programmed. Of course, the formulas become longer and consist of more terms, the higher in perturbation order one goes. Writing an efficient algorithm for calculating the order by order contribution is a natural future direction. Another inviting future direction is to try and solve the maximum entropy problem described here in other models, for example Markovian models.

It is well known that the statistically correct solution to many scientific or algorithmic problems is given by the entropy maximizer. Because maximizing entropy is sometimes difficult, an alternative, less optimal solution is sometimes taken, such as optimizing second moment (e.g., linear regression). The most fascinating challenge arising from this work is to apply the formulas found in this paper to practical problems and to understand when they allow an improvement to the current solutions.

\section{Feynman Calculus and Proofs of the Theorems}
\subsection{Feynman Calculus: Perturbative Expression for Critical Point}\label{sec:feynman}
We now introduce the main technical tool of this paper - using Feynman calculus to express critical points of functions.

Let $\tau :\cc^{r + m}\to \cc$ be an analytic function. Consider $V = \cc^m$ as a vector space over $\cc$. We write $\tau^x(y)$ for the value of $\tau$ at $(x,y)$ and think of it as a family of functions $\tau^x : V \to\cc$ parameterized by $x \in \cc^r$. Henceforth, unless explicitly stated otherwise, functions between vector spaces are not necessarily linear.

We will assume that $\tau^0$ has a critical point at $y = 0$, which is non-degenerate (see below).
This will imply that for sufficiently small values of $x$ and $y$, $\tau^x$ obtains a unique critical point $y = crit(\tau^x)$, and the assignment $x\mapsto crit(\tau^x)$ is an analytic function. We will see that the coefficients of the Taylor expansion of $x\mapsto crit(\tau^x)$ around $x = 0$ are given in terms of summation over trees. The entire discussion can be viewed as a kind of effective version of the contraction mapping proof of the implicit function theorem. We now explain this in more detail.

Let $V^* = Hom(V,\cc)$ denote the dual vector space, consisting of $\cc$-linear maps from $V$ to $\cc$. The derivative of a function $h : V \to \cc$ is a function $\partial h : V \to V^*$. We define a function $f : \cc^{r} \times V \to V^*$ to be the partial derivative of $\tau^x$ in the $y$ direction,
\[f(x,y) = \partial \tau^x.\]
Thus $f(x,y)=0$ if and only if $y$ is a critical point for $\tau^x$. We assume that $\tau^0$ has a critical point at 0, so $f(0,0)=0$. We further assume that this critical point is \emph{non-degenerate}, which means
\[B :=\partial_y f|_{(x,y)=(0,0)} : V \to V^*\] is an invertible linear transformation.


Define $g : \cc^r \times V \to V$ by
\begin{equation}
g^x(y) = y - B^{-1} \circ f(x,y)
\end{equation}
Clearly, $f(x,y) =0$ iff $g^x(y)=y$. In particular, $g^0(0) = 0$.

Now consider \[\partial g : \cc^r \times V \to Hom(V,V).\]
We have \[(\partial g)(0,0) = \operatorname{id}_V - B^{-1}\circ B = 0.\]
By continuity, there exist open neighborhoods $0\in U\subseteq \cc^r, 0\in W\subseteq V$ such that \begin{enumerate}
\item For all $x\in U$, $g^x(W)\subseteq W$, and
\item for all $x \in U, y\in W$, $(\partial g)(x,y) \in Hom(V,V)$ is a (linear) contraction mapping with respect to the standard inner product on $V$.
\end{enumerate}
It follows that for all $x \in U$, $g^x : W \to W$ is a contraction mapping.

Now by the Banach fixed point theorem, for any $x\in U$ there exists a unique $g^x$-fixed point $y = crit(\tau^x) \in W$, \[g^x(crit(\tau^x))=crit(\tau^x).\]
Moreover, for any $x \in U$ the sequence $y_n^x$ defined by $y_0^x=0, y_1^x = g^x(0), y_2^x = g^x(g^x(0)),\ldots,y_{n+1}^x=g^x(y_n^x),$ converges to $crit(\tau^x)$.

By the standard contraction-mapping proof of the implicit function theorem \footnote{See for example the proof in Joel Feldman's lecture notes, \url{http://www.math.ubc.ca/~feldman/m425/impFnThm.pdf}. To apply the argument we need that $(\partial f)(x,y) \in Hom(V,V^*)$ is invertible for all $(x,y) \in U \times W$. Since $\partial f = B(\operatorname{id} - \partial g)$ this follows from our assumption that $(\partial g)(x,y) \in Hom(V,V)$ is contracting.}, one shows that the assignment $x \mapsto crit(\tau^x)$ is an analytic function $U \to W$.
Write the multivariate Taylor expansion of $\tau^x$ as
\[
\tau^{x}\left(y\right)=\widehat T_{0}^{x}+\widehat T_{1}^{x}y+\frac{1}{2!}(\widehat B + \widehat T_{2}^x)\left(y,y\right)+\sum_{l\geq3} \frac{1}{l!}\widehat T_l^x(y,\ldots,y).
\]
This expansion is unique provided we assume that $\widehat T_l^x \in Hom(V^{\otimes l},\cc) \simeq (V^*)^{\otimes l}$ is symmetric: $\widehat T_l^x\in Sym^l(V^*)\subset (V^*)^{\otimes l}$. \[\widehat B \in V^* \otimes V^*\] is obtained from $B \in Hom(V,V^*)$ via the natural isomorphism $Hom(V,V^*)\simeq V^*\otimes V^*$, so that $\widehat T_2^{x=0}=0$. Similarly, we define
\[T_l^x \in Hom(V^{\otimes (l-1)},V^*)\]
to be the image of $\widehat T_l^x$ under the natural isomorphism
$(V^*)^{\otimes l}\simeq
Hom(V^{\otimes l-1},V^*)$.
Unwinding definitions we see that

\begin{equation}
\label{eq:T-contraction}
g^x(y) = -B^{-1} T_1^x - B^{-1}T_2^x(y) - \sum_{l\geq3}\frac{1}{(l-1)!} B^{-1}T_l^x(y^{\otimes (l-1)})
\end{equation}
We now explain how $y_n^x := (g^x)^n(0)$ can be interpreted as a sum over rooted planar trees of height $\leq n$.

\begin{defn}
Let $\unlabt$ be a rooted tree.
The \emph{height} of $\unlabt,~h$, is the length of the longest simple path $(v_0,v_1,\ldots,v_h)$ in $\unlabt$ with $v_0 = v_{out}$.
A \emph{planar rooted tree} (or PRT) $\unlabt$ is a rooted tree with an additional specification of a cyclic order on the set of edges incident to $v$ for every $v \in V$.
\end{defn}

We now give a precise definition of the contribution $\cont^x(\unlabt)$ of a PRT to $y_n^x$. Some readers may be satisfied with the more informal definition in Remark \ref{rem:informal-cont} below.

Let $\unlabt$ be a planar rooted tree, with a distinguished leaf $v_{out}$. Fix some $v \in V$. We define an (ordered) tuple of planar rooted trees, \emph{the subtrees of $v$}, as follows. Remove the path from $v_{out}$ to $v$, including both endpoints. Each connected component of the resulting topological space corresponds to a subtree of $v$. To obtain this subtree, we append a leaf at the end of the half-open edge which was connected to $v$; this is the distinguished leaf of the subtree. The cyclic orders at the vertices are determined by those of $\unlabt$; the order on the set of subtrees of $v$ is determined by the cyclic order of the edges around $v$, starting from the (omitted) edge toward $v_{out}$.
If $\unlabt$ has height $\leq h$ then the subtrees of the root are of height $\leq h-1$ and completely specify the isomorphism type of $\unlabt$. This leads us to the following recursive construction.

For $h \geq 0$ we define the set $\pt^{\leq h}$ of \emph{representative planar rooted trees of height $\leq h$} by setting $\pt^{\leq 0} = \emptyset$ and $\pt^{\leq h} = Seq(\pt^{\leq h-1})$ for $h\geq 1$ where $Seq(A)=\bigcup_{k \in \zz_{\geq 0}} A^k$ denotes the set of all finite sequences $(a_1,...,a_k)$ with $k\geq 0$ and $a_i \in A$; by definition, $A^0 = \{()\}$ is the set containing only the empty sequence. The set $\pt := \bigcup_{h=0}^\infty \pt^{\leq h}$ is then a set of representatives for the isomorphism types of planar rooted trees.
%

Fix $x \in U \subset \cc^r$. For $l\geq 0$ define $S_l^x : V^{\otimes {l}} \to V$ by $S_l^x = B^{-1} T_{l+1}^x$. Now, define $\cont^x : \pt \to V$ recursively on the height. For $\pt^{\leq 0}$ define it to be $0.$
For $h\geq 1$ we define $\cont^x(a_1,...,a_l)$ for $(a_1,...,a_l) \in \pt^{\leq h}$ by
\[\cont^x((a_1,...,a_l)) = -\frac{1}{l!} S_l^x (\bigotimes_{i=1}^l \cont^x (a_i)) \]

\begin{rem}\label{rem:informal-cont}
Somewhat less formally, given a PRT $\unlabt$ we can compute $\cont^x(\unlabt)$ by placing $B^{-1} \in V\otimes V$ on the edges $E(\unlabt)$ and $\frac{(-1)}{(val(v)-1)!}\cdot \widehat T_{val(v)}^x \in (V^*)^{\otimes val(v)}$ on each vertex $v\in V(\unlabt)\setminus\{v_{out}\}$ of degree $val(v)$, and then using the incidence pairing to contract the tensors. The result is $\cont^x(\unlabt)\in V$.
\end{rem}
\begin{lem}\label{lem:fixed_point}
For $h \geq 0$ we have
\begin{equation}
\label{eq:lambda=trees}
y_{h}^x = \sum_{\unlabt \in \pt^{\leq h}} \cont^x(\unlabt)
\end{equation}
In particular,
\begin{equation}
\label{eq:lambda=trees_full}
crit(\tau^x) = \sum_{\unlabt \in \pt} \cont^x(\unlabt),
\end{equation}
where the right hand side converges for all $x \in U$.
\end{lem}
\begin{proof}
We show the result holds by induction on $h$.
For $h = 0$ $y_0^x = 0$ which by convention is equal to the empty sum. Now suppose we have established Equation (\ref{eq:lambda=trees}) for some $h$, let us show it holds for $h+1$.

By Equation (\ref{eq:T-contraction}) and Equation (\ref{eq:lambda=trees}) for $h$ we have:
\begin{eqnarray}
y_{h+1}^x &=& g^x(y^x_h) = \sum_{l\geq 0}\frac{(-1)}{l!}S_l^x((y_h^x)^{\otimes l}) =\\
&=& \sum_{l\geq 0} \frac{(-1)}{l!} S_l^x\left((\sum_{a \in \pt^{\leq h}} \cont^x(a))^{\otimes l}\right)
\end{eqnarray}
On the other hand we have

\begin{eqnarray}
\sum_{\Gamma \in \pt^{\leq (h+1)}} A^x(\Gamma) &=& \sum_{l\geq 0, (a_1,...,a_l) \in (\pt^{\leq h})^l} \frac{(-1)}{l!} S_l^x(\bigotimes A^x(a_i)) \\
\notag&=& \sum_{l\geq 0} \frac{(-1)}{l!} S_l^x \left((\sum_{a \in \pt^{\leq h}} A^x(a))^{\otimes l}\right)
\end{eqnarray}
which shows that Equation (\ref{eq:lambda=trees}) holds for $h+1$, and the proof of the first claim is complete. The second claim, Eq (\ref{eq:lambda=trees_full}), immediately follows.
\end{proof}

\subsubsection{Coordinate Expression for the Perturbed Critical Point}\label{subsec:coordinate-expression}
\begin{defn}
Let $L$ be a finite set.
\emph{An $L$-labeled rooted tree} (or \emph{labeled-RT}, if $L$ is clear from the context) is a rooted tree $\Gamma$ such that all the edges of $\Gamma$ are labeled by elements of $L$.
We will say an $L$-labeled rooted tree $\Gamma$ has \emph{output} $l_0 \in L$ if the edge connecting the distinguished leaf $v_{out}$ to the root is labeled $l_0$.
The automorphism group $Aut(\Gamma)$ of $\Gamma$ consists of maps $\psi : V(\Gamma) \to V(\Gamma)$ that fix the distinguished leaf $v_{out}$ and such that $u,v\in V(\Gamma)$ are connected by an edge labeled $l$ iff $\psi(u),\psi(v)$ are connected by an edge labeled $l$.
\end{defn}

Let $L$ be a set of size $m= \dim V.$ In this subsection, we assume that $B \in Sym^2(V^*)$ is the complexification of a positive definite pairing. In other words, there exists a basis $\{e_i\}_{i\in L}$ to $V$ such that
\begin{equation}
\label{eq:B=identity}B=\sum_{i\in L} e^*_i\otimes e^*_i,
\end{equation}
where $\{e_i^*\}$ be the basis of $V^*$ dual to $\{e_i\}.$

For $l \geq 1$ and $i_1,...,i_l \in L$ define $\theta_{i_1,\ldots,i_l} : \cc^r \to \cc$ by
\[\widehat T_l^x=\sum_{i_1,\ldots,i_l\in L}\theta_{i_1,\ldots,i_l}(x) e_{i_1}^*\otimes\cdots\otimes e_{i_l}^*.\]
The symmetry condition $\widehat T_l^x \in Sym(V^*) \subset (V^*)^{\otimes l}$ is equivalent to requiring $\theta_{i_1,\ldots,i_l}(x)$ to be invariant under any permutation of the indices $i_1,\ldots,i_l.$

\begin{defn}\label{def:amplitude_labeled}
Let $\Gamma$ be an $L$-labeled tree. For any internal vertex $v$ such that the edges incident to $v$ are labeled $i_1,\ldots,i_d$ write \[C_v^x=-\theta_{i_1,\ldots,i_d}(x).\]
We define \emph{the amplitude function} $A_\labt^x:\cc^r \to \cc$ of $\labt$ by
\begin{equation}\label{eq:general_amps}
A_\labt^x=\prod_{v\neq v_{out}}C_v^x,
\end{equation}
where the product is taken over all vertices except $v_{out}.$
\end{defn}


Write
\[crit(\tau^x) = \sum_{i\in L} crit(\tau^x)_i e_i\]
\begin{thm}\label{thm:feynman}
For $i\in L$ We have
\begin{equation}
crit(\tau^x)_i = \sum_\labt \frac{A_\labt^x}{|Aut(\labt)|}
\end{equation}
where $\labt$ ranges over a set of representatives for the isomorphism types of $L$-labeled rooted trees with output $i.$

In the special case when $T_2^x=0$ for all $x$ the sum is taken over trees where all vertices are either leaves or of degree at least $3.$
\end{thm}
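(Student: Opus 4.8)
\section*{Proof proposal}

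The plan is to start from the fixed-point expansion of Lemma \ref{lem:fixed_point}, $crit(\tau^x) = \sum_{\unlabt \in \pt}\cont^x(\unlabt)$, and to convert this sum over (unlabeled) planar rooted trees into a sum over $L$-labeled non-planar rooted trees in two moves: (i) pass to coordinates in the orthonormal basis, which turns the tensor $B^{-1}$ sitting on each edge into a sum over edge labels, and (ii) match the combinatorial prefactors. Concretely, since $B = \sum_{i\in L}e_i^*\otimes e_i^*$ we have $B^{-1} = \sum_{i\in L}e_i\otimes e_i$, so placing $B^{-1}$ on each edge and contracting, as in the remark following the definition of $\cont^x$, amounts to summing over all labelings $\ell$ of the edges of $\unlabt$ by $L$, with the output edge pinned to $i$ when we extract the $i$-th coordinate. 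At a vertex $v\neq v_{out}$ of valency $d$ whose incident edges carry labels $j_1,\dots,j_d$, the weight $\tfrac{-1}{(val(v)-1)!}T^x_{val(v)}$ contributes $\tfrac{-1}{(d-1)!}\theta_{j_1,\dots,j_d}(x)$, because $\theta$ is symmetric and hence depends only on the multiset of incident labels. Thus
\[
crit(\tau^x)_i = \sum_{\unlabt\in\pt}\ \sum_{\substack{\ell:E(\unlabt)\to L\\ \ell(\text{output})=i}}\ \prod_{v\neq v_{out}}\frac{-\theta_{\ell|_v}(x)}{(val(v)-1)!}.
\]

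The heart of the proof is a symmetry-factor computation reorganizing this double sum by the underlying abstract labeled tree. I would fix an unlabeled, non-planar rooted tree shape $\unlabt_0$ and let $G = Aut(\unlabt_0)$ be its rooted automorphism group. Two observations drive the bookkeeping. First, the number of planar structures on $\unlabt_0$ (a linear order of the children at each vertex) equals $P := \prod_{v\neq v_{out}}(val(v)-1)!$, and $G$ acts freely on this set because a planar rooted tree has no nontrivial automorphisms; hence the number of isomorphism types of PRTs with shape $\unlabt_0$ is exactly $P/|G|$. Second, and this is what makes the factorials cancel, the summand above depends only on $\ell$ and not on the planar structure, again by symmetry of $\theta$: writing $A^x_{(\unlabt_0,\ell)} = \prod_{v\neq v_{out}}(-\theta_{\ell|_v}(x))$, the summand is $A^x_{(\unlabt_0,\ell)}/P$. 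Summing over the $P/|G|$ PRT-types and over all labelings $\ell$ with output $i$, the factor $P$ cancels and the contribution of shape $\unlabt_0$ becomes $\tfrac{1}{|G|}\sum_\ell A^x_{(\unlabt_0,\ell)}$.

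It then remains to apply orbit-stabilizer to the action of $G$ on the labelings $\ell$ of $\unlabt_0$ with output edge labeled $i$. The orbits are precisely the isomorphism types of $L$-labeled rooted trees $\labt$ with shape $\unlabt_0$ and output $i$; the stabilizer of $\ell$ is exactly $Aut(\labt)$; and $A^x_{(\unlabt_0,\ell)}$ is constant on each orbit. Therefore $\tfrac{1}{|G|}\sum_\ell A^x_{(\unlabt_0,\ell)} = \sum_{[\labt]}\tfrac{A^x_\labt}{|Aut(\labt)|}$, the sum running over isomorphism types with shape $\unlabt_0$, and summing over all shapes $\unlabt_0$ yields the claimed formula. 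For the special case $T^x_2\equiv 0$: any labeled tree carrying a non-leaf vertex of valency $2$ contributes a factor $\theta_{j_1,j_2}(x)=0$ and hence drops out, leaving only trees all of whose non-leaf vertices have valency $\geq 3$.

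The main obstacle, and the step demanding the most care, is the symmetry-factor matching in the middle paragraph: verifying that the planar-ordering factor $\prod(val(v)-1)!$ built into $\cont^x$ cancels exactly the overcounting of planar structures, leaving residual weight $1/|Aut(\labt)|$. This rests on the rigidity of PRTs (free $G$-action) together with the planar-independence of the amplitude (symmetry of $\theta$), and it requires tracking the roles of $v_{out}$ and of the root correctly in the valency count. A secondary technical point is legitimizing the rearrangement of the infinite tree sum, which I would settle by noting that in the contraction regime of Lemma \ref{lem:fixed_point} the sum converges absolutely after possibly shrinking $U$, so regrouping by abstract labeled tree is valid.
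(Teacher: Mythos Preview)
Your proof is correct and rests on the same two ingredients as the paper's---rigidity of planar rooted trees and symmetry of the $\theta$ coefficients---but the bookkeeping is organized differently. The paper skips the intermediate grouping by unlabeled shape: it reads the $i$-th coordinate of Eq.~(\ref{eq:lambda=trees_full}) directly as a sum $\sum_{\tilde\Gamma}\tilde A^x_{\tilde\Gamma}$ over $L$-\emph{labeled} planar rooted trees with output $i$, and then applies orbit--stabilizer once to the forgetful map from labeled PRTs to labeled RTs. The acting group there is the semi-direct product of the symmetric groups on children, of order $\prod_v(\deg(v)-1)!$; it acts transitively on the fiber over a labeled RT $\Gamma$ with stabilizer $Aut(\Gamma)$, so the factorials cancel in a single step. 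Your route instead factors through the unlabeled shape $\unlabt_0$ and uses $Aut(\unlabt_0)$, at the cost of a second orbit--stabilizer argument on labelings; this makes the free action on planar structures more explicit but is otherwise equivalent. Either way one lands on the same $1/|Aut(\Gamma)|$ weight, and your treatment of the $T_2^x\equiv 0$ case matches the paper's.
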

The following definition will be useful for proving the theorem.
\begin{defn}
(a) An \emph{$L$-labeled planar rooted tree} $\tilde{\labt}$ is a planar rooted tree together with a labeling.\\
(b) The \emph{small amplitude} $\tilde{A}_{\tilde\Gamma}^x$ of an $L$-labeled planar rooted tree $\labt$ is defined by
\[\tilde{A}_{\tilde\Gamma}^x=\prod_{v\neq v_{out}}\frac{C_v^x}{(deg(v)-1)!},\]
where the product is taken over all vertices except $v_{out},$ and $deg(v)$ is the degree of $v.$
\end{defn}
\begin{proof}[Proof of Theorem \ref{thm:feynman}.]
The $i^{th}$ coordinate of the expression (\ref{eq:lambda=trees_full}) for $crit(\tau^x)$
is
\[
crit(\tau^x)_i = \sum_{\tilde\Gamma} \tilde{A}_{\tilde\Gamma}^x
\]
where $\tilde\Gamma$ ranges over the $L$-labeled planar rooted trees with output $i.$
There is a forgetful map $For$ from the set of labeled planar rooted trees to the set of labeled rooted trees, obtained by forgetting the cyclic orders.

Let $\Gamma$ be a labeled rooted tree. We claim that size of the fiber over $\Gamma$ of the forgetful map  is given by
\[|For^{-1}(\{\Gamma\})| = \frac{1}{|Aut(\Gamma)|}\prod (deg(v)-1)!.\]
where the product is taken over all vertices.

Indeed, fix some reference $\tilde{\Gamma}_0\in For^{-1}(\Gamma)$.  There is a group $G$ of order $|G| = \prod_{v \in V({\tilde{\Gamma}_0})} {(\deg(v) -1)!}$ which acts transitively on $For^{-1}(\Gamma)$ by changing the order of the subtrees. In fact $G$ can be constructed as the semi-direct product of the symmetric groups $\{S_{\deg(v) -1}\}_{v \in V({\tilde{\Gamma}_0})}.$ The stabilizer of $\tilde{\Gamma}_0$ is naturally identified with $Aut(\Gamma)$, so by the orbit-stabilizer theorem the size of the fiber is $ \frac{1}{|Aut(\Gamma)|}\prod (deg(v)-1)!$ as claimed.

The amplitude function $\tilde{A}^x_{\tilde\Gamma}$ is independent of the specific $\tilde{\Gamma}\in For^{-1}(\Gamma).$
By summing over all $For-$preimages we get,
\[
\sum_{\tilde\Gamma\in For^{-1}(\Gamma)} \tilde{A}_{\tilde\Gamma}^x=\frac{\prod (deg(v)-1)!}{|Aut(\Gamma)|}\prod \frac{C_v^x}{(deg(v)-1)!}=\frac{A^x_\Gamma}{|Aut(\Gamma)|}.
\]
As claimed.

The last claim follows from the observation that $C_v=0$ for any bivalent vertex.
\end{proof}

\subsection{Proofs of the main theorems}\label{sec:proofs}

Let $(\Sigma,Q,\{r_i\}_{i=1}^k)$ be a normalized KL-problem. Let $$\Delta'=\{{p_\sigma} \in \rr^\Sigma | (\forall \sigma ~ p_\sigma > 0) \wedge \sum_\sigma p_\sigma = 1\}$$ denote the \emph{open} simplex of probability distributions. We are looking for the probability distribution $P\in \Delta'$ that minimizes
\[D(P||Q) = \sum_{\sigma\in\Sigma}p_{\sigma}\log\left(\frac{p_{\sigma}}{q_{\sigma}}\right)\]
subject to the constraints $\ee_P r_i = \rho_i$. Using the assumption $q_\sigma > 0$, simple analysis shows that $D(P||Q)$ tends to $+\infty$ as we approach the boundary of $\Delta'$. To be precise, for every $M\in \rr$ there is a compact subset $K\subset \Delta'$ such that $D(P||Q) \geq M$ for all $P\in \Delta'-K$. It follows that $D(P||Q)$ obtains a minimum in $\Delta'$, which is unique since $D(P||Q)$ is a strictly convex function of $P$. Now apply Lagrange multipliers
\begin{align}\label{eq:KL-constraints}
D(P||Q) - &\sum_{i=1}^k \lambda_i (\ee_P (r_i) - \rho_i) = \\ \notag &\sum_{\sigma}Q_{\sigma}\frac{p_{\sigma}}{Q_{\sigma}}\left(-\log\frac{p_{\sigma}}{Q_{\sigma}}-\sum_{i=1}^{k}\lambda_{i}r_{i}(\sigma)-\lambda_{0}\right)+\sum_{i}\lambda_{i}\rho_{i}+\lambda_{0}.
\end{align}
Set $x_{\sigma}=\frac{p_{\sigma}}{Q_{\sigma}}.$ By requiring the vanishing of the partial derivatives with respect to $x_\sigma,$ we find that
\[
x_{\sigma}=\exp\left(-\sum_{i=1}^{k}\lambda_{i}r_{i}(\sigma)-\lambda_{0}-1\right).
\]
Plugging this into Equation \ref{eq:KL-constraints} we are looking for the minimum of
\begin{equation}\label{eq:tau_hat}
\hat{\tau}^\rho(\lambda) = \mathbb{E}_{Q}\exp\left(-\sum_{i=1}^{k}\lambda_{i}r_{i}(\sigma)-\lambda_{0}-1\right)+\sum\lambda_{i}\rho_{i}+\lambda_{0}
\end{equation}
We will compute $crit(\hat{\tau}^\rho),$ the critical point of $\hat{\tau}^\rho(\lambda),$ in two ways. First, by applying the Feynman calculus directly, we will express $crit(\hat{\tau}^\rho)$ as a sum over moment trees, and thus prove Theorem \ref{thm:taylor-exp-parameters}. Second, we can solve $\partial_{\lambda_0} T=0$ for $\lambda_0$ and only then apply the Feynman calculus. This way will lead to a sum over cumulant trees, and the formula of Theorem \ref{thm:cumulants_L}.

\subsubsection{Proof of moment tree theorems}\label{sec:exp_param}
\begin{proof}[Proof of Theorem \ref{thm:taylor-exp-parameters}]
Write $\lambda_i' = \lambda_i$ for $1\leq i\leq k$ and $\lambda_0'= \lambda_0 + 1.$
Define $\tau^\rho(\lambda')$ by analytically continuing $\hat{\tau}^\rho(\lambda_0'-1,\lambda_1',\ldots,\lambda_k')$ to $\cc^k \times V$ for $V = \cc^{k+1}$.
We assume that for $0 \leq i \leq k,~\lambda_i' \in V^*$ is the dual to the standard basis $\{e_i\}_{i=0}^k$ for $V.$ Since we assume $(\Sigma,Q,\{r_i\})$ is a normalized KL problem, the quadratic term of $\tau^\rho$ is
\[\frac{1}{2} \sum_{0\leq i,j \leq k} (\ee_Q r_i r_j)\lambda_i' \lambda_j' =\sum_{i=0}^k \lambda_i'^2\]
and Equation (\ref{eq:B=identity}) holds. Now apply Theorem \ref{thm:feynman}. In the notation of Subsection \ref{subsec:coordinate-expression}, with $x = \rho$ and $y=\lambda',$ we have
\begin{equation}\label{eq:moment_coefs}
\theta_{i_1,\ldots,i_l}^\rho = \begin{cases}
	  (-1)^l \ee_Q(\prod_{j=1}^l r_{i_j}) & \text{if } l \geq 3 \\
      \rho_{i_1} & \text{if } l = 1 \text{ and } 1\leq i_1 \leq k \\
      0 & \text{if } l = 1 \text{ and } i_1 = 0,~ \text{or}~ l=2.
   \end{cases}
\end{equation}
and
\begin{equation}\label{eq:moment_crit_val}
\lambda_i' = crit(\tau^\rho)_i = \sum_\labt \frac{A_\labt^\rho}{|Aut(\labt)|},
\end{equation}
where $\labt$ ranges over a set of representatives for the isomorphism types of $\{0,1,\ldots,k\}-$labeled rooted trees, with output $i,$ each vertex which is not a leaf is of degree at least $3,$ and, the edge of no leaf, other than the root, can be labeled $0.$ The last condition is a consequence of $\theta_0^\rho=0.$
Hence, $\labt$ ranges over a set of representatives for the isomorphism types of $i$-moment trees as defined in \ref{subsec:moment-trees}.
$A_\labt^\rho$ is the amplitude defined in Equation \ref{eq:general_amps}. But by Equation \ref{eq:moment_coefs}, $A_\labt^\rho$ is just $A_\labt\rho^{I_\labt},$ where $A_\labt$ is the amplitude defined in Definition \ref{def:moment_tree_amp}. Theorem \ref{thm:taylor-exp-parameters} is thus proved.
\end{proof}


\begin{lem}\label{cor:lambda_0}
The unique solution to $\frac{\partial}{\partial \lambda_0'} \tau^\rho =0$ is given by
\begin{equation}\label{eq:exponent_0}
\exp(\lambda_0')=\sum_{\sigma\in\Sigma}q_\sigma\exp(-\sum_{i=1}^k\lambda_ir_i(\sigma)).
\end{equation}
\end{lem}
\begin{proof}
By Equation \ref{eq:tau_hat} $\tau^\rho(\lambda')$ has the form
\begin{equation}\label{eq:tau}
\tau^\rho(\lambda') = \mathbb{E}_{Q}\exp\left(-\sum_{i=1}^{k}\lambda_{i}r_{i}(\sigma)-\lambda'_{0}\right)+\sum\lambda_{i}\rho_{i}+\lambda'_{0}-1
\end{equation}
A direct computation gives
$\mathbb{E}_{Q}\exp\left(-\sum_{i=1}^{k}\lambda_{i}r_{i}(\sigma)-\lambda'_{0}\right) = 1,$ or
\[
\mathbb{E}_{Q}\exp\left(-\sum_{i=1}^{k}\lambda_{i}r_{i}(\sigma)\right)=e^{\lambda_0'},\] as claimed.
\end{proof}

\begin{proof}[Proof of Theorem \ref{thm:7}]
We have
$$\mu(\lambda_{1},\dots,\lambda_{k})=\frac{\sum_{\sigma}s(\sigma)q_{\sigma}\exp\left(-\sum_{i=1}^{k}\lambda_{i}r_{i}\left(\sigma\right)\right)}
{\sum_{\sigma}q_{\sigma}\exp\left(-\sum_{i=1}^{k}\lambda_{i}r_{i}\left(\sigma\right)\right)}
$$
which by Lemma \ref{cor:lambda_0} can be rewritten as
\begin{equation}\label{eq:reduced_form_mu}
\mu(\lambda_{1},\dots,\lambda_{k})=\sum_{\sigma}s(\sigma)q_{\sigma}\exp\left(-\lambda_0'-\sum_{i=1}^{k}\lambda_{i}r_{i}\left(\sigma\right)\right)=\sum_{\sigma}s(\sigma)q_{\sigma}\exp\left(-\sum_{i=0}^{k}\lambda'_{i}r_{i}\left(\sigma\right)\right),
\end{equation}
where $r_0=1,$ as before. Expand $\exp(x)=\sum_{a\geq0}\frac{x^a}{a!}$ to obtain
\begin{align}\label{eq:mu_first_step}
\mu(\lambda_{1},\dots,\lambda_{k}) &= \sum_\sigma q_\sigma s(\sigma)\sum_{i_0,i_1,\ldots,i_k\geq 0}\prod\frac{\prod_{j=0}^k (-r_j(\sigma))^{i_j}\lambda'_j(\sigma)^{i_j}}{\prod_{j=0}^k i_j!}\\
\notag & =\sum_{i_0,i_1,\ldots,i_k\geq 0}\frac{(-1)^{\sum_{j=0}^k i_j}\ee_Q \left(s\prod_{j=0}^k r_j^{i_j}\right)\prod_{j=0}^k\lambda'_j(\sigma)^{i_j}}{\prod_{j=0}^k i_j!}.
\end{align}
We can rewrite the last equation as
\begin{equation}\label{eq:mu_basic_diagramatic_exp}
\mu(\lambda_{1},\dots,\lambda_{k}) = \sum_\Lambda\frac{C_v}{|Aut(\Lambda)|}\lambda^{'I_\Lambda}
\end{equation}
where the sum is taken over labeled rooted trees with a single non-leaf vertex $v$, where the output edge is labeled by $k+1,$ and the other edge-labels are taken from $\{0,1,\ldots,k\}.$ The coupling value $C_v$ is as in Definition \ref{def:moment_tree_amp}. By Equation \ref{eq:moment_crit_val}, we may write the critical value $\lambda'_i$ as a sum over labeled rooted trees. Substituting this into Equation \ref{eq:mu_basic_diagramatic_exp} can be interpreted as a sum over all possible ways of replacing the leaves of $\Lambda$ whose edge is labeled by $i\in\{0,\ldots,k\}$ by $i-$moment trees.
%
It follows that
\begin{equation}
\mu=\sum_{\Gamma}\frac{A_{\labt}\rho^{I_\Gamma}}{|Aut(\labt)|}
\end{equation}
where the sum is taken over $k+1-$moment trees $\labt$.
The $1/|Aut(\labt)|$ coefficient is obtained from the orbit-stabilizer theorem, by considering the action of $Aut(\Lambda) = \prod_{j=0}^{k} i_j !$ on the trees obtained after substitution.
\end{proof}

\subsubsection{Proofs of cumulant tree theorems}\label{sec:333}
\begin{proof}[Proof of Theorem \ref{thm:cumulants_L}]
Recall that the critical $\lambda_i',$ for $0\leq i\leq k,$ are those which minimize
\begin{equation}
\tau^\rho(\lambda') = \mathbb{E}_{Q}\exp\left(-\sum_{i=1}^{k}\lambda_{i}'r_{i}(\sigma)-\lambda'_{0}\right)+\sum\lambda_{i}'\rho_{i}+\lambda'_{0}-1
\end{equation}
of Equation \ref{eq:tau}.

In order to get an expression in terms of joint cumulants, subtitute $\lambda'_0$ such that $\frac{\partial}{\partial\lambda_0'} \tau^\rho =0$.
$\log(\sum_{\sigma\in\Sigma}q_\sigma\exp(-\sum_{i=1}^k\lambda_i' r_i(\sigma))),$
by Lemma \ref{cor:lambda_0}.
Denote by $T(\lambda_1,\ldots,\lambda_k)$ the result of this substitution (recall $\lambda_i = \lambda_i'$ for $i\neq 0$). Then
\[
T(\lambda_{1},\ldots,\lambda_{k})=e^{-\log E+1-1}E+\phi+\log E-1=\log E +\phi.
\]
where $E=\mathbb{E}_{Q}\exp\left(-\sum_{i=1}^{k}\lambda_{i}r_{i}(\sigma)\right)$
and $\phi=\sum_{i=1}^{k}\lambda_{i}\rho_{i}.$
By definition, $\log E$ is the generating function
for the joint cumulants. More precisely,
\begin{equation}
\label{eq:joint cumulants}
\log E=\sum_{n}\sum_{1\leq i_{1},\ldots,i_{n}\leq k}(-1)^{n}\frac{\kappa(r_{i_{1}},\ldots,r_{i_{n}})}{n!}\prod_{j=1}^{n}\lambda_{i_{j}}.
\end{equation}.
We claim that within the convergence domain there exists a unique critical value of $\lambda_i$. Indeed, $log E+\phi$ is convex since $E$ is a linear combination of exponents with non negative coefficients. By Theorem \ref{thm:feynman}, this unique critical value is given by Eq (\ref{eq:cumulants_L}). The shape conditions hold since the propogator is the quadratic tensor in $\log E+\phi,$ hence all vertices must either be of degree $1$ or degree at least $3.$
\end{proof}
\begin{proof}[Proof of Theorem \ref{thm:cumulants_M}]
As in the proof of Theorem \ref{thm:7} the strategy will be to substitute the expressions of $\lambda_i$ in terms of cumulants in
\[
\mu(\lambda_{1},\dots,\lambda_{k})=\frac{\sum_{\sigma}s(\sigma)q_{\sigma}\exp\left(-\sum_{i=1}^{k}\lambda_{i}r_{i}\left(\sigma\right)\right)}
{\sum_{\sigma}q_{\sigma}\exp\left(-\sum_{i=1}^{k}\lambda_{i}r_{i}\left(\sigma\right)\right)}
\]
Introduce a new variable $\alpha$ and write
$$F(\lambda,\alpha)=F(\lambda_1,\ldots,\lambda_k,\alpha)=\log\left(\ee_Q\exp\left(\alpha s-\sum_{i=1}^{k}\lambda_{i}r_{i}\left(\sigma\right)\right)\right).$$
Then $$\mu(\lambda_1,\ldots,\lambda_k)=\frac{\partial}{\partial\alpha}F(\lambda_1,\ldots,\lambda_k,\alpha)|_{\alpha=0}.$$
Now by the definition of cumulants again,
$$F(\lambda,\alpha) = \sum_{1\leq i_{1},\ldots,i_{n}\leq k+1}(-1)^{n-n_{k+1}}\frac{\kappa(r_{i_{1}},\ldots,r_{i_{n}})}{n!}\prod_{j=1}^{n}\lambda_{i_{j}},$$
where $n_{k+1}$ is the number of indices $i_j=k+1$ and $\lambda_{k+1}=\alpha.$
Hence the partial derivative in $\alpha=0$ is just
$$\mu(\lambda)=\sum_{1\leq i_{1},\ldots,i_{n}\leq k}(-1)^{n}\frac{\kappa(r_{k+1},r_{i_{1}},\ldots,r_{i_{n}})}{n!}\prod_{j=1}^{n}\lambda_{i_{j}}.$$

As in the proof of Theorem \ref{thm:7}, $\mu$ can thus be written as
\begin{equation}\label{eq:mu_basic_diagramatic_exp_cumulant}
\mu(\lambda_{1},\dots,\lambda_{k}) = \sum_\Lambda\frac{C_v}{|Aut(\Lambda)|}\lambda^{I_\Lambda}
\end{equation}
where the summation is taken over labeled rooted trees with a single non-leaf vertex $v,$
~output $k+1,$ and the other labels are in $\{1,\ldots,k\}.$ The coupling value is as in Definition \ref{def:cumulant_tree_amp}. The proof now ends by substituting the expressions for $\lambda_i,~1\leq i \leq k$ in terms of cumulants, just as in the proof of Theorem \ref{thm:7}.
\end{proof}
\section*{acknowledgements}
We thank O. Bozo, B. Gomberg, R.S. Melzer, A. Moscovitch-Eiger, R. Schweiger, A. Solomon and D. Zernik for discussions related to the work presented here.

R.T. was partially supported by Dr. Max R\"{o}ssler, the Walter Haefner Foundation and the ETH Zurich Foundation.

\appendix
\section{\label{sub:pf of analyticity}Proof of Lemma \ref{lem:analytic}}
Recall that by Lemma \ref{cor:lambda_0}
\[
\log\left(\sum_{\sigma}q_{\sigma}\exp\left(-\sum_{i=1}^{k}\lambda_{i}r_{i}\left(\sigma\right)\right)\right)=1+\lambda_{0}=1+\lambda_{0}(\lambda_{1},\dots,\lambda_{k}).
\]
Hence $\lambda_{0}$ is an analytic function of $\lambda_{1},\dots,\lambda_{k}$
around $\lambda_{1}=\dots=\lambda_{k}=0$. Now,
\[
\rho_{i}(\lambda_{1},\dots,\lambda_{k})=\sum_{\sigma}r_{i}(\sigma)q_{\sigma}\exp\left(-1-\lambda_{0}(\lambda_{1},\dots,\lambda_{k})-\sum_{l=1}^{k}\lambda_{l}r_{l}\left(\sigma\right)\right)=
\]
\[
=\frac{\sum_{\sigma}r_{i}(\sigma)q_{\sigma}\exp\left(-\sum_{l=1}^{k}\lambda_{l}r_{l}\left(\sigma\right)\right)}{\exp(1+\lambda_{0}(\lambda_{1},\dots,\lambda_{k}))}=\frac{\sum_{\sigma}r_{i}(\sigma)q_{\sigma}\exp\left(-\sum_{l=1}^{k}\lambda_{l}r_{l}\left(\sigma\right)\right)}{\sum_{\sigma}q_{\sigma}\exp\left(-\sum_{l=1}^{k}\lambda_{l}r_{l}\left(\sigma\right)\right)}
\]
So that $\rho_{i}(\lambda_{1},\dots,\lambda_{k})$ is an analytic function of $\lambda_{1},\dots,\lambda_{k}$

The proof that $\lambda_{i}=\lambda_{i}\left(\rho_{1},\ldots,\rho_{k}\right)$
is an analytic function of $\rho_{1},\dots,\rho_{k}$ around
\[
\lambda_{1}=\dots=\lambda_{k}=\rho_{1}=\dots=\rho_{k}=0,
\]
uses the analytic inverse function theorem. It is enough to show that the Jacobian
$\frac{\partial(\rho_{1},\dots,\rho_{k})}{\partial(\lambda_{1},\dots,\lambda_{k})}$
is invertible for$\lambda_{1}=\dots=\lambda_{k}=0.$

But
\begin{align}
\frac{\partial\rho_{i}}{\partial\lambda_{j}}=&
\frac{\left(\sum_{\sigma}r_{i}(\sigma)r_{j}(\sigma)q_{\sigma}\exp\left(-\sum_{l=1}^{k}\lambda_{l}r_{l}\left(\sigma\right)\right)\right)\left(\sum_{\sigma}q_{\sigma}\exp\left(-\sum_{l=1}^{k}\lambda_{l}r_{l}\left(\sigma\right)\right)\right)}
{\left(\sum_{\sigma}q_{\sigma}\exp\left(-\sum_{l=1}^{k}\lambda_{l}r_{l}\left(\sigma\right)\right)\right)^{2}}-\\
\notag&
-\frac{\left(\sum_{\sigma}r_{l}(\sigma)q_{\sigma}\exp\left(-\sum_{l=1}^{k}\lambda_{l}r_{l}\left(\sigma\right)\right)\right)\left(\sum_{\sigma}r_{j}(\sigma)q_{\sigma}\exp\left(-\sum_{l=1}^{k}\lambda_{l}r_{l}\left(\sigma\right)\right)\right)}{{\left(\sum_{\sigma}q_{\sigma}\exp\left(-\sum_{l=1}^{k}\lambda_{l}r_{l}\left(\sigma\right)\right)\right)^{2}}}.
\end{align}
Evaluation at $\lambda_{1}=\dots=\lambda_{k}=0$ gives
\[
\frac{\partial\rho_{i}}{\partial\lambda_{j}}\left|_{\lambda_{1}=\cdots=\lambda_{k}=0}\right.=\frac{\left(\sum_{\sigma}r_{i}(\sigma)r_{j}(\sigma)q_{\sigma}\right)\left(\sum_{\sigma}q_{\sigma}\right)-\left(\sum_{\sigma}r_{i}(\sigma)q_{\sigma}\right)\left(\sum_{\sigma}r_{j}(\sigma)q_{\sigma}\right)}{\left(\sum_{\sigma}q_{\sigma}\right)^{2}}=
\]
\[
=\frac{\mathbb{E}_{Q}(r_{i}r_{j})\cdot1-\mathbb{E}_{Q}(r_{i})\mathbb{E}_{Q}(r_{j})}{1^{2}}=\mathrm{Cov}_{Q}(r_{i},r_{j}).
\]
By assumption the KL constraint problem is normalized, hence
\[
\frac{\partial\rho_{i}}{\partial\lambda_{j}}\left|_{\lambda_{1}=\cdots=\lambda_{k}=0}\right.=\mathrm{Cov}_{Q}(r_{i},r_{j})=\delta_{i,j}.
\]
The Jacobian $\frac{\partial(\rho_{1},\dots,\rho_{k})}{\partial(\lambda_{1},\dots,\lambda_{k})}=I_{k}$
is thus invertible.
\qed


\bibliographystyle{plain}

\end{document}